\documentclass{article}

\usepackage{amsmath,amssymb,amsthm}
\newtheorem{lemma}{Lemma}
\newtheorem{proposition}{Proposition}
\newtheorem{theorem}{Theorem}

\usepackage[backend=biber, style=numeric]{biblatex} 
\title{Computing lower expectations with respect to total variation distance and chi-squared divergence balls}
\author{Jasper De Bock}
\date{}
\begin{document}
\maketitle

\begin{abstract}
We derive closed form expressions for the lower expectations that correspond to total variation distance and chi-squared divergence balls around a probability mass function over a finite set.
\end{abstract}

\section{Introduction}
In the context of our work on robustness quantification for probabilistic classifiers~\cite{detavernier2025robustness}, we had to evaluate the lower expectations that correspond to total variation distance~\cite{gibbsdistances} and chi-squared divergence \cite{gibbsdistances} balls around a categorical/multinomial distribution. To our surprise, while it seems to us that this is a task that is essentially well understood from Distributionally Robust Optimization (DRO), we were unable to find closed form expressions for these lower expectations in the literature. This note aims to remedy this situation.

\section{General setting}

Let $\mathcal{X}$ be a finite non-empty set, with $|\mathcal{X}|=n$, let $\Sigma_\mathcal{X}$ be the set of all probability mass functions over $\mathcal{X}$, and let $\mathbb{R}^\mathcal{X}$ be the set of all real-valued functions on $\mathcal{X}$.

For any $f\in\mathbb{R}^\mathcal{X}$ and any $p\in\Sigma_\mathcal{X}$, we write $E_p(f)$ for the expectation of $f$ with respect to $p$, which is defined as
\begin{equation*}
    E_p(f):=\sum_{x\in\mathcal{X}}p(x)f(x).
\end{equation*}
For any non-empty set $\mathcal{M}\subseteq\Sigma_\mathcal{X}$, we write $\underline{E}_\mathcal{M}(f)$ for the lower expectation of $f$ with respect to $\mathcal{M}$, which is defined as \begin{equation*}
    \underline{E}_\mathcal{M}(f):=\inf_{p\in\mathcal{M}}E_p(f).
\end{equation*}
Concretely, we are interested in computing $\underline{E}_\mathcal{M}(f)$ for sets $\mathcal{M}$ of the form
\begin{equation*}
    B_d(p,\delta)=\{q\in\Sigma_\mathcal{X}:d(q,p)\leq\delta\},
\end{equation*}where $p\in\Sigma_\mathcal{X}$ is a given probability mass function, $d:\Sigma_\mathcal{X}\times\Sigma_\mathcal{X}\to\mathbb{R}_{\geq0}$ is a given distance/divergence, and $\delta\in\mathbb{R}_{\geq0}$ is a given radius. More specifically, we focus on the cases where $d$ is the total variation distance or the $\chi^2$-divergence.

\section{Total variation balls}\label{sec:tv}

The first case that we consider is when $d$ is the total variation distance, which is defined for all $q\in\Sigma_\mathcal{X}$ by
\begin{equation*}
    d_\mathrm{TV}(q,p):=\frac{1}{2}\sum_{x\in\mathcal{X}}|q(x)-p(x)|.
\end{equation*}
In this case, for any $\delta\in[0,1]$, we write $B_\mathrm{TV}(p,\delta)$ instead of $B_{d_\mathrm{TV}}(p,\delta)$.

For any given $f\in\mathbb{R}^\mathcal{X}$, our aim is to compute $\underline{E}_{B_\mathrm{TV}(p,\delta)}(f)$, which we will henceforth denote by $\underline{E}^\mathrm{TV}_{p,\delta}(f)$ for the sake of brevity.

The first step in computing $\underline{E}^\mathrm{TV}_{p,\delta}(f)$ is to sort the elements of $\mathcal{X}=\{x_1,\ldots,x_n\}$ in such a way that $f(x_1)\leq f(x_2)\leq\cdots\leq f(x_n)$. We will henceforth assume that $\mathcal{X}$ is ordered in this way.

Now let $r\in\{1,\ldots,n\}$ be the smallest index such that $\delta\geq\sum_{i=r+1}^{n} p(x_i)$. Note that such an index must exist, since $\sum_{i=n+1}^n p(x_i)=0$ and $\delta\in[0,1]$.
The following result then provides the desired closed form expression.

\begin{theorem}\label{thm:lowerexpectationtv}
Let $r$ be defined as above. Then
\begin{equation*}
    \underline{E}^\mathrm{TV}_{p,\delta}(f)
    =
    E_{q_\mathrm{TV}}(f)
\end{equation*}
where, if $r>1$, $q_\mathrm{TV}\in\Sigma_\mathcal{X}$ is given by
\begin{equation*}
    q_\mathrm{TV}(x_i):=
    \begin{cases}
        p(x_1)+\delta & \text{if } i=1,\\
        p(x_i) & \text{if } i\in\{2,\ldots,r-1\},\\
        \sum_{i=r}^{n} p(x_i)-\delta & \text{if } i=r,\\
        0 & \text{if } i\in\{r+1,\ldots,n\}.
    \end{cases}
\end{equation*}
and, if $r=1$, $q_\mathrm{TV}\in\Sigma_\mathcal{X}$ is given by
\begin{equation*}
    q_\mathrm{TV}(x_i):=
    \begin{cases}
        1 & \text{if } i=1,\\
        0 & \text{if } i\in\{2,\ldots,n\}.
    \end{cases}
\end{equation*}
\end{theorem}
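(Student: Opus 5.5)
The proof has two parts. First we check that $q_\mathrm{TV}$ lies in $B_\mathrm{TV}(p,\delta)$, so that $\underline{E}^\mathrm{TV}_{p,\delta}(f)\leq E_{q_\mathrm{TV}}(f)$. Then we show $E_q(f)\geq E_{q_\mathrm{TV}}(f)$ for every $q\in B_\mathrm{TV}(p,\delta)$.

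\emph{Membership.} For $r>1$, minimality of $r$ gives $\delta<\sum_{i=r}^n p(x_i)$, so $q_\mathrm{TV}(x_r)>0$. All entries are nonnegative and sum to $\sum_{i=1}^n p(x_i)=1$. Mass $\delta$ is added at $x_1$ and the same total mass $\delta$ is removed from $x_r,\ldots,x_n$. Since $r>1$, $x_1$ is not among these points, so $d_\mathrm{TV}(q_\mathrm{TV},p)=\tfrac12(\delta+\delta)=\delta$.

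For $r=1$ we have $\delta\geq 1-p(x_1)$. The point mass at $x_1$ has distance $\tfrac12\bigl((1-p(x_1))+\sum_{i\geq2}p(x_i)\bigr)=1-p(x_1)\leq\delta$.

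\emph{Lower bound.} Fix $q\in B_\mathrm{TV}(p,\delta)$ and set $g:=q-p$. Then $\sum_i g(x_i)=0$, $\sum_i|g(x_i)|\leq 2\delta$, and $g(x_i)\geq -p(x_i)$. Write $g=g^+-g^-$. Because $g$ sums to zero, $\sum g^+=\sum g^-=:m$, and the constraint $\sum|g|\leq2\delta$ gives $m\leq\delta$.

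Since $f(x_i)\geq f(x_1)$, the positive part satisfies $\sum_i g^+(x_i)f(x_i)\geq m f(x_1)$.

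For the negative part, $g^-$ is a vector with $0\leq g^-(x_i)\leq p(x_i)$ and total mass $m$, and we want an upper bound on $\sum_i g^-(x_i)f(x_i)$. This is a fractional knapsack problem. The maximum is attained by greedily filling from the top index $n$ downward, because $f$ is nondecreasing. Call this maximum $\phi(m)$. It is concave and piecewise linear in $m$, with slopes $f(x_n), f(x_{n-1}),\ldots$ in that order.

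Combining the two parts,
\[
E_q(f)-E_p(f)\;\geq\; m f(x_1)-\phi(m)=:\psi(m).
\]
The increments of $\psi$ are $f(x_1)-f(x_j)\leq 0$, so $\psi$ is nonincreasing in $m$. Hence the bound is weakest at the largest admissible $m$. That is $m=\delta$ when $r>1$. When $r=1$, the removal must be capped so that only $x_2,\ldots,x_n$ are drained, i.e.\ $m=1-p(x_1)$. Evaluating $\psi$ there gives exactly $E_{q_\mathrm{TV}}(f)-E_p(f)$ in both cases.

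\emph{The main obstacle} is the overlap at $x_1$: a single $g(x_1)$ cannot be both positive and negative, but the separate bounds on $g^+$ and $g^-$ treat the two parts independently. Note that $\psi(m)$ is only a lower bound, so it is legitimate to weaken constraints when deriving it. For $r>1$, the greedy removal uses $m=\delta<\sum_{i\geq r}p(x_i)$ and so never reaches $x_1$; no overlap arises and the bound is tight. For $r=1$ the overlap genuinely matters, so I would handle this case separately and directly:
\[
E_q(f)\geq f(x_1)=E_{q_\mathrm{TV}}(f)
\]
trivially for every $q\in\Sigma_\mathcal{X}$. Together with membership, this settles $r=1$ without using the general bound.
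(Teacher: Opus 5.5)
Your proof is correct, but it follows a genuinely different route from the paper. The paper works with minimizers. It uses closedness of $B_\mathrm{TV}(p,\delta)$ to pick an optimal $q^*$ that maximizes $q(x_1)$, and shows by mass-transfer arguments that $q^*(x_1)=p(x_1)+\delta$ and $q^*(x_i)\leq p(x_i)$ for $i\geq2$. It then repeats this selection-plus-exchange step twice more, first maximizing $\sum_{i=2}^r q(x_i)$ and then minimizing $q(x_r)$, until the optimizer is forced to coincide with $q_\mathrm{TV}$.

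You instead prove an inequality valid for every $q$ in the ball, $E_q(f)\geq E_p(f)+\psi(m)\geq E_p(f)+\psi(\delta)$. You get it by splitting $q-p$ into its positive and negative parts, bounding the first trivially and the second by the greedy fractional-knapsack value, and using that $\psi$ is nonincreasing. Tightness then follows from $q_\mathrm{TV}\in B_\mathrm{TV}(p,\delta)$.

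Your argument needs no compactness or existence of minimizers. It also exposes the dependence on $\delta$ explicitly: the lower expectation is $E_p(f)+\psi(\delta)$, a convex, piecewise-linear, nonincreasing function of $\delta$. The cost is that it invokes optimality of the greedy knapsack solution. That fact is standard, but it is itself an exchange argument, and you only assert it. The paper's proof is fully self-contained, and it explains structurally why an optimizer must look like $q_\mathrm{TV}$ rather than just certifying that it does.

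One remark on your last paragraph: the overlap at $x_1$ is not an obstacle. Your bound is a relaxation, so it is valid regardless. It is even tight for $r=1$. For $m\geq 1-p(x_1)$ the greedy removal draws from $x_1$, so $\psi$ has slope $f(x_1)-f(x_1)=0$ on $[1-p(x_1),1]$. Hence $\psi(\delta)=\psi(1-p(x_1))=f(x_1)-E_p(f)$ there.

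Your description of the admissible $m$ as ``capped'' at $1-p(x_1)$ is not accurate: a $q$ that removes mass from $x_1$ can have $m>1-p(x_1)$. This is harmless, because you treat $r=1$ directly, exactly as the paper does.
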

Intuitively, this corresponds to constructing $q_\mathrm{TV}$ starting from $p$, by moving as much mass as possible from the elements of $\mathcal{X}$ with the highest $f$-values to the element of $\mathcal{X}$ with the lowest $f$-value, until we have moved a total mass of $\delta$ or, if $r=1$, have moved all available mass.

\section{chi-squared divergence balls}\label{sec:chi2}

The second case that we consider is when $d$ is the chi-squared divergence, often denoted as $\chi^2$-divergence, which, if $p(x_i)>0$ for all $i\in\{1,\ldots,n\}$, is defined for all $q\in\mathbb{R}^\mathcal{X}$ by
\begin{equation*}
    d_{\chi^2}(q,p):=\sum_{x\in\mathcal{X}}\frac{(q(x)-p(x))^2}{p(x)}
    =\sum_{x\in\mathcal{X}}p(x)\left(\frac{q(x)}{p(x)}-1\right)^2.
\end{equation*}
In this case, for any $\delta\geq0$, we write $B_{\chi^2}(p,\delta)$ instead of $B_{d_{\chi^2}}(p,\delta)$.

For any given $f\in\mathbb{R}^\mathcal{X}$, our aim is to compute $\smash{\underline{E}_{B_{\chi^2}(p,\delta)}(f)}$, which we will henceforth denote by $\underline{E}^{\chi^2}_{p,\delta}(f)$ for the sake of brevity.

The first step is again to sort the elements of $\mathcal{X}=\{x_1,\ldots,x_n\}$ in such a way that $f(x_1)\leq f(x_2)\leq\cdots\leq f(x_n)$. Let $\ell$ be the largest index such that $f(x_\ell)=f(x_1)$. Next, for any $k\in\{\ell,\ldots,n\}$, we define
\begin{equation*}
    m_k:=\sum_{i=1}^k p(x_i),~~
\mu_k:=\frac{1}{m_k}\sum_{i=1}^k p(x_i)f(x_i)
\text{ and }\sigma_k^2:=\frac{1}{m_k}\sum_{i=1}^k p(x_i)(f(x_i)-\mu_k)^2
\end{equation*}
We also let $\delta_\ell:=+\infty$ and, for any $k\in\{\ell+1,\ldots,n\}$, let
\begin{equation*}
    \delta_k:=\frac{1}{m_k}\left(\frac{\sigma_k^2}{(f(x_k)-\mu_k)^2}+1-m_k\right).
\end{equation*}
These $\delta_k$ are the critical values of $\delta$ at which the lower expectation $\underline{E}^{\chi^2}_{p,\delta}(f)$ changes its form, as we will see below. We start out by showing that they are nicely ordered.
\begin{proposition}\label{prop:orderingdeltak}
The $\delta_k$ are well-defined, positive and non-increasing in $k$:
\begin{equation*}
    +\infty=\delta_\ell>\delta_{\ell+1}\geq\cdots\geq\delta_{n-1}\geq\delta_n>0.
\end{equation*}
\end{proposition}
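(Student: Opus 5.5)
The plan is to rewrite $\delta_k$ in a form where both positivity and monotonicity become consequences of the Cauchy--Schwarz inequality. Throughout, write $p_i:=p(x_i)>0$ and $f_i:=f(x_i)$. For $k\in\{\ell,\ldots,n\}$ and $t\in\mathbb{R}$, introduce
\begin{equation*}
A_k(t):=\sum_{i=1}^k p_i(t-f_i)^2
\quad\text{and}\quad
B_k(t):=\sum_{i=1}^k p_i(t-f_i)=m_k(t-\mu_k).
\end{equation*}

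\emph{Well-definedness.} Since all $p_i>0$, we have $m_k>0$. For $k\geq\ell+1$, the average $\mu_k$ includes the term $f_1<f_k$ with positive weight, while every other $f_i\le f_k$. Hence $\mu_k<f_k$, so $f_k-\mu_k>0$ and $\delta_k$ is a finite real number.

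\emph{Reformulation.} The bias--variance identity $A_k(t)=m_k\sigma_k^2+m_k(t-\mu_k)^2$, evaluated at $t=f_k$, gives
\begin{equation*}
\frac{\sigma_k^2}{(f_k-\mu_k)^2}+1=\frac{A_k(f_k)}{m_k(f_k-\mu_k)^2}=\frac{m_kA_k(f_k)}{B_k(f_k)^2}.
\end{equation*}
Substituting this into the definition of $\delta_k$ yields
\begin{equation*}
\delta_k+1=\frac{A_k(f_k)}{B_k(f_k)^2},\qquad k\in\{\ell+1,\ldots,n\}.
\end{equation*}
I would therefore study the function $h_k(t):=A_k(t)/B_k(t)^2$ on $t\geq f_k$, where $B_k(t)>0$.

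\emph{Positivity.} Apply Cauchy--Schwarz with weights $p_i$ to get $B_k(t)^2\le m_kA_k(t)$. Equality would require $t-f_i$ to be constant over $i\le k$. This is impossible for $k>\ell$, because $f_1<f_k$. Hence, at $t=f_k$,
\begin{equation*}
h_k(f_k)>\frac{1}{m_k}\ge 1,
\end{equation*}
that is, $\delta_k>0$. The strict inequality $\delta_\ell=+\infty>\delta_{\ell+1}$ is then immediate, since $\delta_{\ell+1}$ is finite.

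\emph{Monotonicity.} This is the main step. Fix $k\in\{\ell+1,\ldots,n-1\}$ and observe two facts.
\begin{itemize}
\item[(i)] $h_{k+1}(f_{k+1})=h_k(f_{k+1})$, because the extra term $i=k+1$ contributes $p_{k+1}(f_{k+1}-f_{k+1})^j=0$ to both $A_{k+1}$ and $B_{k+1}$.
\item[(ii)] $h_k$ is non-increasing on $[f_k,\infty)$. Using $A_k'=2B_k$ and $B_k'=m_k$, we get
\begin{equation*}
h_k'(t)=\frac{2B_k^3-2m_kA_kB_k}{B_k^4}=\frac{2\,(B_k^2-m_kA_k)}{B_k^3}\le 0,
\end{equation*}
again by Cauchy--Schwarz and the fact that $B_k>0$ on this interval.
\end{itemize}
Since $f_{k+1}\ge f_k$, facts (i) and (ii) combine to give $\delta_{k+1}+1=h_k(f_{k+1})\le h_k(f_k)=\delta_k+1$.

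I expect the only real obstacle to be finding the reformulation $\delta_k+1=A_k(f_k)/B_k(f_k)^2$. Once that identity is established, every remaining step reduces to Cauchy--Schwarz and a one-line derivative computation.
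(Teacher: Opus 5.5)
Your proof is correct, and it takes a genuinely different route from the paper.

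For well-definedness and positivity, the paper argues directly from the definition. The facts $\sigma_k^2>0$, $f(x_k)-\mu_k>0$ and $0<m_k\leq 1$ give $\delta_k>0$ immediately. Your Cauchy--Schwarz argument is the same fact in disguise: by your bias--variance identity, strictness of $B_k(t)^2\leq m_kA_k(t)$ is exactly $\sigma_k^2>0$.

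For monotonicity, the paper never manipulates the formula for $\delta_k$. It argues by contradiction through Lemma~\ref{lemma:lowerexpectationchi2}. Suppose $\delta_k<\delta_{k+1}$ and take $\delta=\delta_{k+1}$. Then the minimiser $q_{k+1}$ over $\mathcal{Q}_{k+1}$ has $q_{k+1}(x_{k+1})=0$, so it lies in $\mathcal{Q}_k$. Meanwhile the unique minimiser $q_k$ over $\mathcal{Q}_k$ has $q_k(x_k)<0\leq q_{k+1}(x_k)$. This yields the contradiction $E_{q_{k+1}}(f)>\min_{q\in\mathcal{Q}_k}E_q(f)\geq\min_{q\in\mathcal{Q}_{k+1}}E_q(f)=E_{q_{k+1}}(f)$.

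The paper's route buys economy and interpretation. It reuses the closed-form minimiser that is needed anyway for Theorem~\ref{thm:lowerexpectationchi2}. It also explains the ordering through the nesting $\mathcal{Q}_k\subseteq\mathcal{Q}_{k+1}$ and the meaning of $\delta_k$ as the radius at which $q_k(x_k)$ hits zero.

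Your route is self-contained and elementary. Through the identity $\delta_k+1=A_k(f(x_k))/B_k(f(x_k))^2$ and a one-line derivative computation, it needs neither Lemma~\ref{lemma:lowerexpectationchi2} nor uniqueness of minimisers. The proposition could therefore be stated and proved before any optimisation. It also gives slightly more. Because Cauchy--Schwarz is strict for $k>\ell$, you actually have $h_k'<0$ on $[f(x_k),\infty)$. Hence $\delta_{k+1}<\delta_k$ whenever $f(x_{k+1})>f(x_k)$, and $\delta_{k+1}=\delta_k$ exactly when $f(x_{k+1})=f(x_k)$.
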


Now let $r\in\{\ell,\ldots,n\}$ be the largest index such that $\delta_r>\delta$. Due to the previous proposition, this means that $r$ is the unique value in $\{\ell,\ldots,n\}$ such that
\begin{equation*}
\delta_\ell\geq\cdots\geq\delta_r>\delta\geq\delta_{r+1}\geq\cdots\geq\delta_n\geq0
\end{equation*}

The following theorem then gives us the desired closed form expression for the lower expectation that we are after.

\begin{theorem}\label{thm:lowerexpectationchi2}
With $r\in\{\ell,\ldots,n\}$ defined as above, we have that
\begin{equation}\label{eq:lowerexpectationchi2}
    \underline{E}^{\chi^2}_{p,\delta}(f)
    =
    \begin{cases}
    \mu_r-\sigma_r\sqrt{m_r\delta-(1-m_r)} & \text{if } r>\ell,\\
    f(x_1) & \text{if } r=\ell.
    \end{cases}
\end{equation}
\end{theorem}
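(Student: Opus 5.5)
We view this as the convex program: minimize $\sum_i q_i f(x_i)$ subject to $\sum_i q_i=1$, $q_i\geq 0$, and $\sum_i (q_i-p_i)^2/p_i\leq\delta$, where $p_i:=p(x_i)>0$. The feasible set is compact and contains $p$, which is a Slater point when $\delta>0$; the case $\delta=0$ is trivial (then $r=n$ and the formula reduces to $\mu_n$, since the square root vanishes, as one checks from $m_n=1$). The plan is therefore to guess a candidate optimizer $q^\ast$ together with KKT multipliers, verify the KKT conditions, and conclude optimality by convexity. This avoids any need to argue about uniqueness.

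\textbf{Case $r>\ell$.} Put $\lambda\geq0$ on the divergence constraint, $\nu$ on normalisation and $\eta_i\geq0$ on $q_i\geq0$. Stationarity gives $f(x_i)+2\lambda(q_i/p_i-1)+\nu-\eta_i=0$, which suggests the ansatz
\[
q^\ast_i=p_i\,\max\{0,\,c(\tau-f(x_i))\}\qquad\text{with } c>0,
\]
supported exactly on $\{x_1,\ldots,x_r\}$. Let $g_i:=f(x_i)$ for $i\leq r$, with mean $\mu_r$ and variance $\sigma_r^2$ under $p/m_r$.

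The two active constraints fix $c$ and $\tau$. Normalisation gives $c\,m_r(\tau-\mu_r)=1$. For the divergence constraint, $\sum_{i\leq r}q_i^{\ast2}/p_i=c^2 m_r(\sigma_r^2+(\tau-\mu_r)^2)$, and $\sum q^{\ast2}/p-1$ equals the divergence. Writing $a:=\tau-\mu_r$, these become $c=1/(m_r a)$ and $(\sigma_r^2+a^2)/(m_r a^2)=1+\delta$. Hence
\[
a^2=\frac{\sigma_r^2}{m_r(1+\delta)-1},\qquad a=\frac{\sigma_r}{\sqrt{m_r\delta-(1-m_r)}}.
\]
The objective is then $c\sum_{i\le r}p_i g_i(\tau-g_i)=c\,m_r(\mu_r a-\sigma_r^2)=\mu_r-\sigma_r^2/a$, which is exactly the claimed value $\mu_r-\sigma_r\sqrt{m_r\delta-(1-m_r)}$.

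For well-definedness we need $m_r\delta-(1-m_r)>0$ and $\sigma_r>0$. The latter holds because $r>\ell$ and $p_i>0$, so the $g_i$ are not all equal. The former should follow from $\delta\geq\delta_{r+1}$ via Proposition~\ref{prop:orderingdeltak}, or, when $r=n$, from $m_n=1$ and $\delta>0$.

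\textbf{Key verification (main obstacle).} We must show $q^\ast\geq0$ has the right support, namely $f(x_r)<\tau\leq f(x_{r+1})$; the multipliers $\eta_i=c^{-1}\ldots\geq0$ for $i>r$ then follow from the same inequality, with $2\lambda=1/c$. Now
\[
\tau>f(x_r)\iff a>f(x_r)-\mu_r\iff \frac{\sigma_r^2}{(f(x_r)-\mu_r)^2}>m_r(1+\delta)-1\iff \delta<\delta_r,
\]
using that $f(x_r)-\mu_r>0$; this is exactly the definition of $\delta_r$. For the other side, $\tau\leq f(x_{r+1})$ should be equivalent to $\delta\geq\delta_{r+1}$. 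Here the difficulty is that $\delta_{r+1}$ is defined through $m_{r+1},\mu_{r+1},\sigma_{r+1}$ rather than the index-$r$ quantities. I would prove an algebraic identity showing that the threshold $\tau_k(\delta)$ computed with support $k$ equals $f(x_k)$ exactly when $\delta=\delta_k$. Since adding a point with $p$-weight placed at value $f(x_{k+1})=\tau$ with zero $q$-mass does not change the solution, the index-$r$ and index-$(r+1)$ solutions coincide at $\delta=\delta_{r+1}$. Combined with monotonicity of $\tau$ in $\delta$ (as $\delta$ grows, $a$ shrinks, so $\tau$ decreases), this yields the needed equivalence. This bookkeeping step, which is essentially the content behind Proposition~\ref{prop:orderingdeltak}, is where I expect the real work to lie.

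\textbf{Case $r=\ell$.} Here $\delta\geq\delta_{\ell+1}$, or $\ell=n$. The lower bound $E_q(f)\geq f(x_1)$ is trivial. To attain it, take $q_i=p_i/m_\ell$ for $i\leq\ell$ and $q_i=0$ otherwise. Its divergence is $1/m_\ell-1$, and we need this to be at most $\delta$. This holds because $\delta_{\ell+1}\geq 1/m_\ell-1$: the limit of the above construction as the support shrinks to $\{x_1,\ldots,x_\ell\}$, or a direct computation of $\delta_{\ell+1}$, shows this. If $\ell=n$, then $f$ is constant and the claim is immediate.
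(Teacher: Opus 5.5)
Your proof is correct, and it takes a genuinely different route from the paper's. The paper works only on the primal side. It minimises over the relaxed sets $\mathcal{Q}_k$ (nonnegativity dropped) by Cauchy--Schwarz, and shows the minimiser $q_k$ is unique. A mass-moving/maximal-support argument (Lemmas~\ref{lemma:movemasschi2} and~\ref{lemma:maxsupportchi2}) then gives an optimiser supported exactly on $\{x_1,\ldots,x_{k^*}\}$. The paper excludes $k^*>r$ by a convex-combination contradiction that relies on that uniqueness, and finally sandwiches the value between $\min_{q\in\mathcal{Q}_r}E_q(f)$ and $E_{q_r}(f)$.

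You instead exhibit a primal--dual certificate. Your water-filling $q^\ast$ coincides with the paper's $q_r$ (substitute $c=1/(m_ra)$ and $\tau=\mu_r+a$). Optimality then follows from sufficiency of KKT for a convex program, with $2\lambda=1/c$, $\nu=1/c-\tau$ and $\eta_i=\max\{0,f(x_i)-\tau\}$; so for $i>r$ the multiplier is simply $f(x_i)-\tau$, independent of $c$. Your route is shorter and needs no existence, uniqueness or support-maximality reasoning. It also uses only $\delta_{r+1}\leq\delta<\delta_r$, which comes straight from the definition of $r$, so Proposition~\ref{prop:orderingdeltak} is not needed at all. The paper's route avoids Lagrangian machinery, and it produces the uniqueness facts it reuses to prove Proposition~\ref{prop:orderingdeltak}. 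Since you only use the sufficiency direction of KKT, the Slater remark is superfluous.

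The step you left as a sketch does close along the lines you indicate. It also supplies the well-definedness $m_r\delta-(1-m_r)>0$ that you attributed to Proposition~\ref{prop:orderingdeltak}; that proposition only orders the $\delta_k$ and says nothing about $(1-m_r)/m_r$. Take $r<n$ and apply your index-$(r+1)$ construction at $\delta=\delta_{r+1}$. It has $a=f(x_{r+1})-\mu_{r+1}$, hence $\tau'=f(x_{r+1})$ and zero mass at $x_{r+1}$. Restricting the normalisation and divergence sums to $i\leq r$ shows that the same $(c',\tau')$ satisfies your index-$r$ equations with $a'=\tau'-\mu_r=1/(c'm_r)>0$, that is, $a'^2\bigl(m_r\delta_{r+1}-(1-m_r)\bigr)=\sigma_r^2>0$. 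This gives at once $m_r\delta_{r+1}-(1-m_r)>0$ and $\tau_r(\delta_{r+1})=f(x_{r+1})$. Monotonicity in $\delta$ then gives both facts for every $\delta\geq\delta_{r+1}$. For $r=\ell<n$, direct computation gives $\delta_{\ell+1}=1/m_\ell-1$ exactly, so your candidate $p/m_\ell$ on $\{x_1,\ldots,x_\ell\}$ does lie in the ball.
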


If $\vert\mathcal{X}\vert=3$, Equation~\eqref{eq:lowerexpectationchi2} for example results in the following case distinction:
\begin{equation}\label{eq:lowerexpectationchi2three}
    \underline{E}^{\chi^2}_{p,\delta}(f)=
    \begin{cases}
        \mu_3-\sigma_3\sqrt{\delta} & \text{if } \delta<\delta_3,\\
        \mu_2-\sigma_2\sqrt{m_2\delta-(1-m_2)} & \text{if } \delta_3\leq\delta<\delta_2,\\
        f(x_1) & \text{if } \delta\geq\delta_2.
    \end{cases}
\end{equation}
If $\vert\mathcal{X}\vert=2$, the resulting case distinction can be simplified even further, resulting in the following simple expression:
\begin{equation}\label{eq:lowerexpectationchi2two}
    \underline{E}^{\chi^2}_{p,\delta}(f)=
    \begin{cases}
        \displaystyle\sum_{i=1}^2p(x_i)f(x_i)-\sqrt{\delta p(x_1)p(x_2)}\vert f(x_2)-f(x_1)\vert & \text{if } \delta<\frac{p(x_2)}{p(x_1)},\\
        f(x_1) & \text{if } \delta\geq\frac{p(x_2)}{p(x_1)}.
    \end{cases}
\end{equation}

\printbibliography

\appendix

\section{Proofs for the results in Section~\ref{sec:tv}}

\begin{proof}[Proof of Theorem~\ref{thm:lowerexpectationtv}]
If $r=1$, then $\delta\geq\sum_{i=2}^n p(x_i)$, which implies that
\begin{align*}
d_{\mathrm{TV}}(q_\mathrm{TV},p)
&=\frac{1}{2}\sum_{i=1}^n|q_\mathrm{TV}(x_i)-p(x_i)|\\
&=\frac{1}{2}\left(1-p(x_1)+\sum_{i=2}^n p(x_i)\right)
=\sum_{i=2}^n p(x_i)\leq\delta.
\end{align*}
The result then follows because $\min f\leq\underline{E}^\mathrm{TV}_{p,\delta}(f)\leq E_{q_\mathrm{TV}}(f)=f(x_1)=\min f$. So we can now assume that $r>1$, which implies that 
\begin{equation*}
\sum_{i=r+1}^n p(x_i)\leq\delta<\sum_{i=r}^{n} p(x_i)\leq\sum_{i=2}^n p(x_i)=1-p(x_1).
\end{equation*}

Since $B_\mathrm{TV}(p,\delta)$ is a non-empty closed subset of $\Sigma_\mathcal{X}$, the minimum $\underline{E}^\mathrm{TV}_{p,\delta}(f)$ is well-defined and there is at least one $q\in B_\mathrm{TV}(p,\delta)$ such that $\underline{E}^\mathrm{TV}_{p,\delta}(f)=E_q(f)$. Since the set of all $q$ that attain this minimum is also closed, we can choose $q^*$ to be one among them that maximizes $q(x_1)$.

We first show that $q^*(x_1)\geq p(x_1)$. Assume \emph{ex absurdo} that $q^*(x_1)<p(x_1)$. Since $q^*$ and $p$ are both probability mass functions, this implies that there is some $i\in\{2,\ldots,n\}$ such that $q^*(x_i)>p(x_i)$. Consider any such index. Then we can construct a new probability mass function $q'$ by mass $\alpha=\min\{p(x_1)-q^*(x_1),q^*(x_{i})-p(x_{i})\}>0$ from $x_{i}$ to $x_1$. This mass transfer does not alter the total variation distance to $p$, since we are moving the same mass $\alpha$ in opposite directions, and the expectation of $f$ does not increase since $f(x_1)\leq f(x_i)$. This contradicts the definition of $q^*$, which was chosen to have the highest value of $q(x_1)$ among all $q$ such that $\underline{E}^\mathrm{TV}_{p,\delta}(f)=E_q(f)$. So we must have that $q^*(x_1)\geq p(x_1)$, as claimed.

Next, we show that $q^*(x_i)\leq p(x_i)$ for all $i\in\{2,\ldots,n\}$. Assume \emph{ex absurdo} that there is some $i\in\{2,\ldots,n\}$ such that $q^*(x_i)>p(x_i)$. Consider any such index. Then we can construct a new probability mass function $q'$ by moving mass $\alpha=q^*(x_i)-p(x_i)>0$ from $x_i$ to $x_1$. This mass transfer does not alter the total variation distance to $p$, since we are moving the same mass $\alpha$ in opposite directions, and the expectation of $f$ does not increase since $f(x_1)\leq f(x_i)$. This contradicts the definition of $q^*$, which was chosen to have the highest value of $q(x_1)$ among all $q$ such that $\underline{E}^\mathrm{TV}_{p,\delta}(f)=E_q(f)$. So we must have that $q^*(x_i)\leq p(x_i)$ for all $i\in\{2,\ldots,n\}$, as claimed.

As a next step, we show that $q^*(x_1)=p(x_1)+\delta$. Assume \emph{ex absurdo} that $p(x_1)\leq q^*(x_1)<p(x_1)+\delta$. Since $\delta<1-p(x_1)$, this implies that $q^*(x_1)<1$. So there is some $i\in\{2,\ldots,n\}$ such that $0<q^*(x_i)\leq p(x_i)$. Consider any such $i$. Then we can construct a new probability mass function $q'$ by moving mass $\alpha=q^*(x_i)>0$ from $x_i$ to $x_1$. This mass transfer again does not alter the total variation distance to $p$, nor does it increase expectation of $f$, once more contradicting the definition of $q^*$. So we must have that $q^*(x_1)=p(x_1)+\delta$, as claimed.

We conclude that $q*$ belongs to the set
\begin{equation*}
\mathcal{Q}^*:=\{q\in\Sigma_\mathcal{X}: q(x_1)=p(x_1)+\delta\text{ and }q(x_i)\leq p(x_i)\text{ for all }i\in\{2,\ldots,n\}\}.
\end{equation*}
Since $d_\mathrm{TV}(q,p)=\delta$ for all $q\in\mathcal{Q}^*$ and therefore $\mathcal{Q}^*\subseteq B_\mathrm{TV}(p,\delta)$, and since $q^*$ was such that $\underline{E}^\mathrm{TV}_{p,\delta}(f)=E_{q^*}(f)$, this implies that $\underline{E}^\mathrm{TV}_{p,\delta}(f)=\min_{q\in\mathcal{Q}^*} E_q(f)$. Since the set of all $q\in\mathcal{Q}^*$ that attain the minimum $\min_{q\in\mathcal{Q}^*} E_q(f)$ is closed, we can choose $q^{**}$ to be one among them that maximizes $\sum_{i=2}^r q(x_i)$.

We now show that $q^{**}(x_i)=0$ for all $i\in\{r+1,\ldots,n\}$. Assume \emph{ex absurdo} that there is some $i\in\{r+1,\ldots,n\}$ such that $q^{**}(x_i)>0$. Consider any such index $i$. Then
\begin{align*}
\sum_{i=2}^r q^{**}(x_i)
&=1-q^{**}(x_1)-\sum_{i=r+1}^n q^{**}(x_i)\\
&<1-q^{**}(x_1)=1-p(x_1)-\delta\\
&\leq1-p(x_1)-\sum_{i=r+1}^n p(x_i)=\sum_{i=2}^r p(x_i).
\end{align*}
Since $q^{**}(x_i)\leq p(x_i)$ for all $i\in\{2,\ldots,n\}$, this implies that there is some $j\in\{2,\ldots,r\}$ such that $q^{**}(x_j)<p(x_j)$. Consider any such index $j$. Then we can construct a new probability mass function $q'$ by moving mass $\alpha=\min\{p(x_j)-q^{**}(x_j),q^{**}(x_i)\}>0$ from $x_i$ to $x_j$. This mass function $q'$ still belongs to $\mathcal{Q}^*$, and the expectation of $f$ does not increase since $f(x_j)\leq f(x_i)$. This however contradicts the definition of $q^{**}$, which was chosen to have the highest value of $\sum_{i=2}^r q(x_i)$ among all $q\in\mathcal{Q}^*$ that attain the minimum $\min_{q\in\mathcal{Q}^*} E_q(f)$. So we must have that $q^{**}(x_i)=0$ for all $i\in\{r+1,\ldots,n\}$, as claimed.

So we find that $q^{**}$ belongs to the set
\begin{multline*}
\mathcal{Q}^{**}:=\{q\in\Sigma_\mathcal{X}: q(x_1)=p(x_1)+\delta,~q(x_i)\leq p(x_i)\text{ for all }i\in\{2,\ldots,r\}\\\text{ and }q(x_i)=0\text{ for all }i\in\{r+1,\ldots,n\}\}.
\end{multline*}
Since $\min_{q\in\mathcal{Q}^*} E_q(f)=E_{q^{**}}(f)$ and $\mathcal{Q}^{**}\subseteq\mathcal{Q}^*$, this implies that
\begin{equation*}
\min_{q\in\mathcal{Q}^{**}} E_q(f)=\min_{q\in\mathcal{Q}^*}E_q(f)=\underline{E}^\mathrm{TV}_{p,\delta}(f).
\end{equation*}
Since the set of all $q\in\mathcal{Q}^{**}$ that attain the minimum $\min_{q\in\mathcal{Q}^{**}} E_q(f)$ is closed, we can choose $q_\mathrm{TV}$ to be one among them that minimizes $q_\mathrm{TV}(x_r)$.

Finally, we show that $q_\mathrm{TV}(x_i)=p(x_i)$ for all $i\in\{2,\ldots,r-1\}$. Assume \emph{ex absurdo} that there is some $i\in\{2,\ldots,r-1\}$ such that $q_\mathrm{TV}(x_i)<p(x_i)$. Consider any such index $i$. Since $q_\mathrm{TV}(x_i)\leq p(x_i)$ for all $i\in\{2,\ldots,r\}$, this implies that $\sum_{i=2}^{r-1} q_\mathrm{TV}(x_i)<\sum_{i=2}^{r-1} p(x_i)$. Since $q_\mathrm{TV}(x_1)=p(x_1)+\delta$ and $q_\mathrm{TV}(x_i)=0$ for all $i\in\{r+1,\ldots,n\}$, this implies that
\begin{align*}
q_{\mathrm{TV}}(x_r)
&=1-q_{\mathrm{TV}}(x_1)-\sum_{i=2}^{r-1} q_{\mathrm{TV}}(x_i)\\
&>1-p(x_1)-\delta-\sum_{i=2}^{r-1} p(x_i)\\
&\geq1-p(x_1)-\sum_{i=r+1}^n-\sum_{i=2}^{r-1}p(x_i)=p(x_r)\geq0.
\end{align*}
So $q_\mathrm{TV}(x_i)<p(x_i)$ but $q_{\mathrm{TV}}(x_r)>0$. Then we can construct a new probability mass function $q'$ by moving mass $\alpha=\min\{p(x_i)-q_\mathrm{TV}(x_i),q_\mathrm{TV}(x_r)\}>0$ from $x_r$ to $x_i$. This mass function $q'$ still belongs to $\mathcal{Q}^{**}$, and the expectation of $f$ does not increase since $f(x_i)\leq f(x_r)$. This however contradicts the definition of $q_\mathrm{TV}$, which was chosen to have the lowest value of $q(x_r)$ among all $q\in\mathcal{Q}^{**}$ that attain the minimum $\min_{q\in\mathcal{Q}^{**}} E_q(f)$. So we must have that $q_\mathrm{TV}(x_i)=p(x_i)$ for all $i\in\{2,\ldots,r-1\}$, as claimed.

So we conclude that our chosen $q_\mathrm{TV}\in\mathcal{Q}^{**}\subseteq\mathcal{Q}^*\subseteq B_\mathrm{TV}(p,\delta)$ such that $\underline{E}^\mathrm{TV}_{p,\delta}(f)=E_{q_\mathrm{TV}}(f)$ satisfies $q_\mathrm{TV}(x_1)=p(x_1)+\delta$, $q_\mathrm{TV}(x_i)=p(x_i)$ for all $i\in\{2,\ldots,r-1\}$ and $q_\mathrm{TV}(x_i)=0$ for all $i\in\{r+1,\ldots,n\}$, and therefore also
\begin{equation*}
q_\mathrm{TV}(x_r)=1-q_\mathrm{TV}(x_1)-\sum_{i=2}^{r-1} q_\mathrm{TV}(x_i)=1-p(x_1)-\delta-\sum_{i=2}^{r-1} p(x_i)=\sum_{i=r}^{n} p(x_i)-\delta>0,
\end{equation*}
as required.
\end{proof}

\section{Proofs for the results in Section~\ref{sec:chi2}}

Our proof for the results in Section~\ref{sec:chi2} make use of a number of intermediate results, which we will state and prove in the form of lemmas. For notational convenience, for all $k\in\{1,\ldots,n\}$, we also let
\begin{equation*}
\mathcal{Q}_k:=\{q\in\mathbb{R}^{\mathcal{X}}
\colon
\sum_{i=1}^n q(x_i)=1, d_{\chi^2}(q,p)\leq\delta,~q(x_i)=0\text{ for all }i>k\}.
\end{equation*}

\begin{lemma}\label{lemma:convexitychi2}
For any $k\in\{1,\ldots,n\}$, the set $\mathcal{Q}_k$ is convex.
\end{lemma}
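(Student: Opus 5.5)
The plan is to write $\mathcal{Q}_k$ as an intersection of convex subsets of $\mathbb{R}^{\mathcal{X}}$, since an intersection of convex sets is convex. We have
\begin{equation*}
\mathcal{Q}_k=A\cap C\cap\bigcap_{i>k}H_i,
\end{equation*}
where:
\begin{itemize}
\item $A:=\{q\in\mathbb{R}^\mathcal{X}\colon\sum_{i=1}^n q(x_i)=1\}$ is an affine hyperplane;
\item $H_i:=\{q\in\mathbb{R}^\mathcal{X}\colon q(x_i)=0\}$ is a linear subspace;
\item $C:=\{q\in\mathbb{R}^\mathcal{X}\colon d_{\chi^2}(q,p)\leq\delta\}$ is a sublevel set.
\end{itemize}
The sets $A$ and $H_i$ are defined by linear equalities and are therefore convex. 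Note that $\mathcal{Q}_k$ is defined inside $\mathbb{R}^\mathcal{X}$ and not $\Sigma_\mathcal{X}$, so no nonnegativity constraint appears. Even if one were present, it would also be a half-space and hence convex.

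The only substantive step is to show that $C$ is convex. Here we use the standing assumption of Section~\ref{sec:chi2} that $p(x_i)>0$ for all $i$. Under it, $d_{\chi^2}(\cdot,p)$ is the function
\begin{equation*}
q\mapsto\sum_{i=1}^n\frac{(q(x_i)-p(x_i))^2}{p(x_i)}.
\end{equation*}
This is a nonnegatively weighted sum of the convex functions $q\mapsto(q(x_i)-p(x_i))^2$, each a convex quadratic composed with an affine map, so it is convex on $\mathbb{R}^\mathcal{X}$. Any sublevel set of a convex function is convex, so $C$ is convex.

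For completeness, the sublevel-set fact can be checked directly. For $q_1,q_2\in C$ and $\lambda\in[0,1]$, convexity gives
\begin{equation*}
d_{\chi^2}(\lambda q_1+(1-\lambda)q_2,p)\leq\lambda d_{\chi^2}(q_1,p)+(1-\lambda)d_{\chi^2}(q_2,p)\leq\delta.
\end{equation*}
Equivalently, $C$ is a closed ellipsoid centred at $p$, because it is a Euclidean ball after the rescaling $q(x_i)\mapsto q(x_i)/\sqrt{p(x_i)}$.

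Combining these, $\mathcal{Q}_k$ is an intersection of convex sets and is therefore convex. This holds even when $\mathcal{Q}_k$ is empty, since the empty set is trivially convex. I do not expect any real obstacle. The only point requiring care is that the positivity of $p$ is used, so that the weights $1/p(x_i)$ are finite and positive.
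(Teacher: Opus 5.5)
Your proof is correct and is essentially the paper's argument: both show that the $\chi^2$-constraint is preserved under convex combinations via convexity of the coordinatewise quadratic (the paper uses $t\mapsto(t-1)^2$ applied to $q(x_i)/p(x_i)$, you use $(q(x_i)-p(x_i))^2/p(x_i)$), and both note that the linear constraints are trivially preserved. Your intersection-of-convex-sets framing only repackages the paper's direct check.
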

\begin{proof}
Consider any $q_1,q_2\in\mathcal{Q}_k$ and any $\lambda\in[0,1]$. Then $q=\lambda q_1+(1-\lambda)q_2$ satisfies the $\chi^2$-divergence constraint because
\begin{align*}
d_{\chi^2}(q,p)
&=\sum_{i=1}^n p(x_i)\left(\frac{\lambda q_1(x_i)+(1-\lambda)q_2(x_i)}{p(x_i)}-1\right)^2\\
&=\sum_{i=1}^n p(x_i)\left(\lambda\frac{q_1(x_i)}{p(x_i)}+(1-\lambda)\frac{q_2(x_i)}{p(x_i)}-1\right)^2\\
&\leq\sum_{i=1}^n p(x_i)\left(\lambda\left(\frac{q_1(x_i)}{p(x_i)}-1\right)^2+(1-\lambda)\left(\frac{q_2(x_i)}{p(x_i)}-1\right)^2\right)\\
&=\lambda d_{\chi^2}(q_1,p)+(1-\lambda)d_{\chi^2}(q_2,p)\leq\delta,
\end{align*}
where we have used the convexity of the function $t\mapsto (t-1)^2$ in the inequality (which holds because the second derivative of this function is positive). Since the other properties are trivially preserved under convex combinations, this implies that $q\in\mathcal{Q}_k$.
\end{proof}

\begin{lemma}\label{lemma:lowerexpectationchi2}
For any $k\in\{\ell+1,\ldots,n\}$, the following are equivalent:
\begin{equation*}
\mathcal{Q}_k\neq\emptyset
\Leftrightarrow
m_k\delta-(1-m_k)\geq0
\Leftrightarrow
\delta\geq\frac{1-m_k}{m_k}.
\end{equation*}
Whenever either of these is true, we have that
\begin{equation}\label{eq:lemma:lowerexpectationchi2}
    \min_{q\in\mathcal{Q}_k} E_q(f)=\mu_k-\sigma_k\sqrt{m_k\delta-(1-m_k)}.
\end{equation}
The minimum is then furthermore attained uniquely for $q_k\in\mathbb{R}^\mathcal{X}$ with $q_k(x_i)=0$ for all $i>k$ and, for all $i\in\{1,\ldots,k\}$,
\begin{equation}\label{eq:lemma:lowerexpectationchi2:qk}
    q_k(x_i)
    =\frac{p(x_i)}{m_k}\left(1-\frac{f(x_i)-\mu_k}{\sigma_k}\sqrt{m_k\delta-(1-m_k)}\right).
\end{equation}
Finally, $q_k(x_k)>0$ (and then also $q_k(x_i)>0$ for all $i\in\{1,\ldots,k\}$) if and only if $\delta<\delta_k$, and $q_k(x_k)=0$ (and then also $q_k(x_i)\geq0$ for all $i\in\{1,\ldots,k\}$) if and only if $\delta=\delta_k$
\end{lemma}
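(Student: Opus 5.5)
The plan is to reduce the problem on $\mathcal{Q}_k$ to a Euclidean projection in the weighted space $L^2(p)$ restricted to the first $k$ coordinates. Write $q(x_i)=p(x_i)h_i$ for $i\le k$ and $q(x_i)=0$ for $i>k$. The constraints then read as follows:
\[
\sum_{i=1}^k p(x_i)h_i=1 \quad\text{and}\quad \sum_{i=1}^k p(x_i)(h_i-1)^2+(1-m_k)\le\delta,
\]
since each $i>k$ contributes $p(x_i)(0-1)^2=p(x_i)$. Setting $g_i:=h_i-1/m_k$ and using the normalisation $\sum_{i\le k} p(x_i)g_i=0$, we expand
\[
\sum_{i\le k}p(x_i)(h_i-1)^2=\sum_{i\le k} p(x_i)g_i^2+m_k\left(\tfrac{1}{m_k}-1\right)^2 .
\]
Since $m_k(1/m_k-1)^2+(1-m_k)=(1-m_k)/m_k$, the constraint becomes
\[
\sum_{i\le k} p(x_i)g_i^2\le \delta-\frac{1-m_k}{m_k}=\frac{m_k\delta-(1-m_k)}{m_k}=:R^2 .
\]
So $\mathcal{Q}_k$ is the image of a ball of radius $R$ inside the hyperplane $\{g:\sum p(x_i)g_i=0\}$. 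This ball is nonempty iff $R^2\ge0$, which gives the three equivalences at once. Note that $m_k>0$, since $k\ge\ell+1$ and we assume $p>0$.

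Next, the objective is
\[
E_q(f)=\sum_{i\le k} p(x_i)f(x_i)h_i=\mu_k+\sum_{i\le k} p(x_i)(f(x_i)-\mu_k)g_i ,
\]
where the shift by $\mu_k$ is harmless because $\sum_i p(x_i)g_i=0$. By Cauchy--Schwarz in the weighted inner product $\langle a,b\rangle=\sum_{i\le k}p(x_i)a_ib_i$, the second term is bounded below by $-\|f-\mu_k\|\,R=-\sqrt{m_k}\,\sigma_k R=-\sigma_k\sqrt{m_k\delta-(1-m_k)}$. Equality holds iff $g=-R(f-\mu_k)/\|f-\mu_k\|$. The vector $f-\mu_k$ already lies in the hyperplane, so this $g$ is admissible. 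We also need $\sigma_k>0$, which holds because $k>\ell$ ensures the values $f(x_1)<f(x_k)$ both carry positive weight. Uniqueness follows from the strictness of Cauchy--Schwarz when $R>0$; when $R=0$ the ball is a single point. Translating back via $h_i=1/m_k+g_i$ should reproduce exactly formula \eqref{eq:lemma:lowerexpectationchi2:qk}, and this is a routine check.

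For the final claim, $q_k(x_k)>0$ iff $\frac{f(x_k)-\mu_k}{\sigma_k}\sqrt{m_k\delta-(1-m_k)}<1$. Here $f(x_k)-\mu_k>0$, because $f(x_k)$ is the largest value among the first $k$ and is strictly larger than $f(x_1)$. Squaring, the condition becomes $m_k\delta-(1-m_k)<\sigma_k^2/(f(x_k)-\mu_k)^2$, i.e.\ $\delta<\delta_k$, with equality in the one corresponding to equality in the other. Since $f(x_i)\le f(x_k)$ for all $i\le k$, the coefficient $1-\frac{f(x_i)-\mu_k}{\sigma_k}\sqrt{\cdot}$ is minimised at $i=k$. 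This yields the parenthetical positivity and nonnegativity statements for all $i\le k$.

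The main obstacle is bookkeeping rather than ideas. Two points need care: the exact completing-the-square step that produces $(1-m_k)/m_k$, and checking that the normalisation $\sqrt{m_k}\sigma_k=\|f-\mu_k\|$ matches the constants in the stated formulas.
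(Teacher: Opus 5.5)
Your proposal is correct and follows essentially the same route as the paper: your $g$ is exactly the paper's $u=q/p-1/m_k$, and the completing-the-square identity producing $(1-m_k)/m_k$ and the Cauchy--Schwarz equality analysis are the same. The only cosmetic difference is that you use the unnormalised weighted inner product $\sum_{i\le k}p(x_i)a_ib_i$ instead of the expectation $E_k$ under $p_k=p/m_k$, which merely rescales by $m_k$; the final threshold argument at $\delta_k$ matches the paper's chain of equivalences.
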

\begin{proof}
Let $\mathcal{X}_k:=\{x_1,\ldots,x_k\}$ and consider the probability mass function $p_k$ on $\mathcal{X}_k$ defined by $p_k(x_i)=\frac{p(x_i)}{m_k}>0$ for all $i\in\{1,\ldots,k\}$. For all $g\in\mathbb{R}^{\mathcal{X}_k}$, we write
\begin{equation*}
E_k(g)=\sum_{i=1}^k p_k(x_i)g(x_i)
\text{ and }
\mathrm{Var}_k(g)=E_k(g-E_k(g))^2
\end{equation*}
for the expectation and variance of $g$ with respect to $p_k$. In particular, if we use $f_k$ to denote the restriction of $f$ to $\mathcal{X}_k$, we then have that $\mu_k=E_k(f_k)$ and $\sigma_k^2=\mathrm{Var}_k(f_k)$.

Consider any $q\in\mathbb{R}^\mathcal{X}$ such that $\sum_{i=1}^kq(x_i)=1$ and $q(x_i)=0$ for all $i>k$ and let $u,y\in\mathbb{R}^{\mathcal{X}_k}$ be defined by $u(x_i)=\frac{q(x_i)}{p(x_i)}-\frac{1}{m_k}$ and $y(x_i)=f(x_i)-\mu_k$ for all $i\in\{1,\ldots,k\}$. Then $E_k(u)=0$ and $E_k(y)=0$, and therefore
\begin{align}\label{eq:expectationwithu}
E_q(f)
&=\sum_{i=1}^k q(x_i)f(x_i)
=E_k\left(m_k\frac{q}{p}f_k\right)\notag\\
&=E_k\left(\left(m_ku+1\right)(y+\mu_k)\right)=m_kE_k\left(u y\right)+m_k\mu_k E_k\left(u\right)+E_k\left(y\right)+\mu_k\notag\\
&=\mu_k+m_kE_k\left(u y\right).
\end{align}
and
\begin{align}\label{eq:chi2divergencewithu}
d_{\chi^2}(q,p)
&=\sum_{i=1}^n p(x_i)\left(\frac{q(x_i)}{p(x_i)}-1\right)^2
=m_k\sum_{i=1}^k p_k(x_i)\left(\frac{q(x_i)}{p(x_i)}-1\right)^2+\sum_{i=k+1}^n p(x_i)\notag\\
&=m_kE_k\left(\left(u+\frac{1}{m_k}-1\right)^2\right)+1-m_k\notag\\
&=m_kE_k\left(u^2\right)+m_k\left(\frac{1}{m_k}-1\right)^2+2m_k\left(\frac{1}{m_k}-1\right)E_k(u)+1-m_k\notag\\
&=m_kE_k\left(u^2\right)+m_k\left(\frac{1}{m_k}-1\right)^2+1-m_k,\notag\\
&=m_kE_k\left(u^2\right)+\frac{(1-m_k)^2+m_k(1-m_k)}{m_k}\notag\\
&=m_kE_k\left(u^2\right)+\frac{1-m_k}{m_k}.
\end{align}
This also implies that
\begin{align}\label{eq:chi2divergenceconstraintwithu}
d_{\chi^2}(q,p)\leq\delta
\Leftrightarrow
E_k(u^2)&\leq\frac{m_k\delta-(1-m_k)}{m_k^2}.
\end{align}
Due the Cauchy-Schwarz inequality, we also know that
\begin{equation}\label{eq:cauchy-schwarz}
\vert E_k(uy)\vert
\leq\sqrt{E_k(u^2)}\sqrt{E_k(y^2)}=\sigma_k\sqrt{E_k(u^2)},
\end{equation}
with equality if and only if $u$ and $y$ are linearly dependent. Since $y$ is not the zero function (because $f(x_{\ell+1})>f(x_\ell)$), this is the case if and only if $u=\alpha y$ for some $\alpha\in\mathbb{R}$, in which case we have that $E_k(y^2)=\sigma_k^2$, $E_k(u^2)=\alpha^2E_k(y^2)=\alpha^2\sigma_k^2$ and $E_k(uy)=\alpha E_k(y^2)=\alpha\sigma_k^2$.

Let us now start by proving the stated equivalences.

If $\mathcal{Q}_k\neq\emptyset$, there is some $q\in\mathcal{Q}_k$ and therefore, due to Equation~\eqref{eq:chi2divergenceconstraintwithu}, some corresponding $u$ such that $E_k(u^2)\leq(m_k\delta-(1-m_k))/m_k^2$. Since $m_k>0$ and $E_k(u^2)\geq0$, this implies that $m_k\delta-(1-m_k)\geq0$. 

Conversely, if $m_k\delta-(1-m_k)\geq0$, if we define $q\in\mathbb{R}^\mathcal{X}$ by $q(x_i)=0$ for all $i>k$ and $q(x_i)=\frac{p(x_i)}{m_k}$ for all $i\in\{1,\ldots,k\}$, then $u=0$ and $E_k(u^2)=0\leq(m_k\delta-(1-m_k))/m_k^2$, which implies that $d_{\chi^2}(q,p)\leq\delta$ due to Equation~\eqref{eq:chi2divergenceconstraintwithu}. So $q\in\mathcal{Q}_k$, which implies that $\mathcal{Q}_k\neq\emptyset$.

That $m_k\delta-(1-m_k)\geq0$ if and only if $\delta\geq(1-m_k)/m_k$ is a matter of simple verification.

For the remainder of the proof, we now assume that $\mathcal{Q}_k\neq\emptyset$, and therefore also that $m_k\delta-(1-m_k)\geq0$.

For any $q\in\mathcal{Q}_k$, it follows from Equations~\eqref{eq:expectationwithu}, \eqref{eq:cauchy-schwarz} and~\eqref{eq:chi2divergenceconstraintwithu} that
\begin{equation*}
E_q(f)=\mu_k+m_kE_k\left(u y\right)
\geq\mu_k-m_k\sigma_k\sqrt{E_k(u^2)}
\geq\mu_k-\sigma_k\sqrt{m_k\delta-(1-m_k)}.
\end{equation*}
Since $m_k>0$, the first inequality is furthermore strict unless $u=\alpha y$ for some $\alpha\leq0$ (and then $E_k(uy)=\alpha\sigma_k^2$ and $E_k(u^2)=\alpha^2\sigma_k^2$) and the second inequality is then strict unless $E_k(u^2)=(m_k\delta-(1-m_k))/m_k^2$. So the unique way to make both inequalities an equality, and hence to achieve the minimum in Equation~\eqref{eq:lemma:lowerexpectationchi2}, is if $u=\alpha y$ for
\begin{equation*}
\alpha=-\frac{1}{m_k\sigma_k}\sqrt{m_k\delta-(1-m_k)},
\end{equation*}
which implies that
\begin{equation*}
    q(x_i)=p(x_i)\left(\frac{1}{m_k}+\alpha y(x_i)\right)=\frac{p(x_i)}{m_k}\left(1-\frac{f(x_i)-\mu_k}{\sigma_k}\sqrt{m_k\delta-(1-m_k)}\right)
\end{equation*}
for all $i\in\{1,\ldots,k\}$. So $q_k$ is indeed the unique element of $\mathcal{Q}_k$ that attains the minimum in Equation~\eqref{eq:lemma:lowerexpectationchi2}, and this minimum is indeed equal to $\mu_k-\sigma_k\sqrt{m_k\delta-(1-m_k)}$.

That $q_k(x_k)>0$ if and only if $\delta<\delta_k$ follows from the following chain of equivalences:
\begin{align*}
q_k(x_k)>0
&\Leftrightarrow
1-\frac{f(x_k)-\mu_k}{\sigma_k}\sqrt{m_k\delta-(1-m_k)}>0\\
&\Leftrightarrow
\sqrt{m_k\delta-(1-m_k)}<\frac{\sigma_k}{f(x_k)-\mu_k}\\
&\Leftrightarrow
m_k\delta-(1-m_k)<\frac{\sigma_k^2}{(f(x_k)-\mu_k)^2}\\
&\Leftrightarrow
\delta<\frac{1}{m_k}\left(\frac{\sigma_k^2}{(f(x_k)-\mu_k)^2}+1-m_k\right)=\delta_k.
\end{align*}
That $q_k(x_k)=0$ if and only if $\delta=\delta_k$ can be shown by replacing the strict inequalities in the above chain of equivalences by equalities.

That $q_k(x_k)>0$ implies that $q_k(x_i)>0$ for all $i\in\{1,\ldots,k\}$, and similarly that $q_k(x_k)=0$ implies that $q_k(x_i)\geq0$ for all $i\in\{1,\ldots,k\}$, follows from Equation~\eqref{eq:lemma:lowerexpectationchi2:qk} and the fact that $f(x_i)\leq f(x_k)$ for all $i\in\{1,\ldots,k\}$. 

\end{proof}

\begin{proof}[Proof of Proposition~\ref{prop:orderingdeltak}]
Consider any $k\in\{\ell+1,\ldots,n\}$. Since $k>\ell$, it follows from the definition of $\ell$ that $f(x_k)\geq f(x_{\ell+1})>f(x_\ell)=f(x_1)$. This implies that the restriction of $f$ to $\{1,\ldots,k\}$ is not a constant function, and therefore that $\sigma_k^2>0$ and $f(x_k)-\mu_k>0$.
Since $p$ is strictly positive and $k>\ell\geq1$, we also have that $0<m_k\leq1$. Combining the above, we conclude that $\delta_k$ is well-defined and positive.

Now consider any $k\in\{\ell+1,\ldots,n-1\}$. We will show that $\delta_k\geq\delta_{k+1}$, which will then imply the desired ordering of the $\delta_k$. Assume \emph{ex absurdo} that $\delta_k<\delta_{k+1}$ and let $\delta=\delta_{k+1}$. Since $\delta=\delta_{k+1}\geq\frac{1-m_{k+1}}{m_{k+1}}$, Lemma~\ref{lemma:lowerexpectationchi2} tells us that there is a $q_{k+1}\in\mathcal{Q}_{k+1}$ such that $E_{q_{k+1}}(f)=\min_{q\in\mathcal{Q}_{k+1}}E_q(f)$ with $q_{k+1}(x_{k+1})=0$ and $q_{k+1}(x_k)\geq0$. Since $q_{k+1}(x_{k+1})=0$, we also have that $q_{k+1}\in\mathcal{Q}_k$, so $\mathcal{Q}_k\neq\emptyset$. Since $\delta_k<\delta_{k+1}=\delta$, Lemma~\ref{lemma:lowerexpectationchi2} therefore also tells us that there is a unique $q_k\in\mathcal{Q}_k$ such that $E_{q_k}(f)=\min_{q\in\mathcal{Q}_k} E_q(f)$, with $q_k(x_k)<0$. Since $q_{k+1}(x_k)\geq0$, we have that $q_{k+1}\neq q_k$. Since $q_k$ was the unique element of $\mathcal{Q}_k$ such that $E_{q_k}(f)=\min_{q\in\mathcal{Q}_k} E_q(f)$, this implies that $E_{q_{k+1}}(f)>\min_{q\in\mathcal{Q}_k} E_q(f)$. But since $\mathcal{Q}_k\subseteq\mathcal{Q}_{k+1}$, we also know that $\min_{q\in\mathcal{Q}_{k+1}} E_q(f)\leq\min_{q\in\mathcal{Q}_k} E_q(f)$. Putting everything together, we arrive at the contradiction
\begin{equation*}
E_{q_{k+1}}(f)=\min_{q\in\mathcal{Q}_{k+1}} E_q(f)\leq\min_{q\in\mathcal{Q}_k} E_q(f)<E_{q_{k+1}}(f).
\end{equation*}
So indeed $\delta_k\geq\delta_{k+1}$.
\end{proof}

\begin{lemma}\label{lemma:movemasschi2}
Consider any $q\in B_{\chi^2}(p,\delta)$ and any $i,j\in\{1,\ldots,n\}$ such that $q(x_i)=0$ and $q(x_j)>0$. Then there is some $0<\epsilon<q(x_j)$ such that the $q'\in\Sigma_\mathcal{X}$ defined by 
\begin{equation*}
q'(x_k)=
\begin{cases}
\epsilon & \text{if } k=i,\\
q(x_j)-\epsilon & \text{if } k=j,\\
q(x_k) & \text{if } k\in\{1,\ldots,n\}\setminus\{i,j\}
\end{cases}
\end{equation*}
is also an element of $B_{\chi^2}(p,\delta)$. The corresponding expectation is given by $E_{q'}(f)=E_q(f)+\epsilon(f(x_i)-f(x_j))$.
\end{lemma}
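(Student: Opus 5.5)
The plan is to look at how $d_{\chi^2}(q',p)$ depends on the amount of mass $\epsilon$ that is moved. Since $q'$ differs from $q$ only in the coordinates $i$ and $j$, we can write
\begin{equation*}
d_{\chi^2}(q',p)-d_{\chi^2}(q,p)=\frac{(\epsilon-p(x_i))^2-p(x_i)^2}{p(x_i)}+\frac{(q(x_j)-\epsilon-p(x_j))^2-(q(x_j)-p(x_j))^2}{p(x_j)}=:h(\epsilon).
\end{equation*}
Here we use $q(x_i)=0$. Expanding the squares gives
\begin{equation*}
h(\epsilon)=\epsilon^2\Big(\tfrac{1}{p(x_i)}+\tfrac{1}{p(x_j)}\Big)-2\epsilon\Big(1+\tfrac{q(x_j)-p(x_j)}{p(x_j)}\Big)=\epsilon^2\Big(\tfrac{1}{p(x_i)}+\tfrac{1}{p(x_j)}\Big)-2\epsilon\,\frac{q(x_j)}{p(x_j)}.
\end{equation*}
This uses that $p(x_i),p(x_j)>0$, which is the standing assumption of this section. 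Note that $i\neq j$, because $q(x_i)=0<q(x_j)$.

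The key observation is that $q(x_j)>0$ makes the linear coefficient strictly negative. Hence $h(\epsilon)<0$ for every
\begin{equation*}
0<\epsilon<\frac{2q(x_j)/p(x_j)}{1/p(x_i)+1/p(x_j)}.
\end{equation*}
We choose $\epsilon$ strictly positive and strictly smaller than both this bound and $q(x_j)$; for instance, half the minimum of the two. For this choice:
\begin{itemize}
\item $q'$ has nonnegative entries, since $q'(x_i)=\epsilon>0$ and $q'(x_j)=q(x_j)-\epsilon>0$;
\item $q'$ sums to one, since mass is only moved between coordinates;
\item $d_{\chi^2}(q',p)<d_{\chi^2}(q,p)\leq\delta$.
\end{itemize}
So $q'\in B_{\chi^2}(p,\delta)$. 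Intuitively, moving mass towards a point where $q$ is zero, and hence below $p$, initially reduces the divergence.

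The expectation formula is a direct computation. Only the terms for $x_i$ and $x_j$ change, so
\begin{equation*}
E_{q'}(f)-E_q(f)=\epsilon f(x_i)-\epsilon f(x_j).
\end{equation*}

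The only delicate step is getting the sign of the linear term in $h$ right, and this is exactly where the hypothesis $q(x_j)>0$ is needed. The rest is routine.
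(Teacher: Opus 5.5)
Your proof is correct and follows essentially the same route as the paper. Both expand the change in divergence to $\epsilon\bigl(\epsilon(1/p(x_i)+1/p(x_j))-2q(x_j)/p(x_j)\bigr)$, use $q(x_j)>0$ to make it strictly negative for small $\epsilon$, and compute the expectation directly; your explicit choice of $\epsilon$ (half the minimum of the two bounds) simply makes the paper's ``small enough'' concrete.
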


\begin{proof}
Consider any $0<\epsilon<q(x_j)$. Then $q'$ is clearly a probability mass function. We also find that
\begin{align*}
d_{\chi^2}(q',p)
&=d_{\chi^2}(q,p)+\frac{(q'(x_i)-p(x_i))^2}{p(x_i)}-\frac{(q(x_i)-p(x_i))^2}{p(x_i)}\\
&~~~~~~~~~~~~~~~~~~~~~~~~~~~~~+\frac{(q'(x_j)-p(x_j))^2}{p(x_j)}-\frac{(q(x_j)-p(x_j))^2}{p(x_j)}\\
&=d_{\chi^2}(q,p)+\frac{(\epsilon-p(x_i))^2}{p(x_i)}-\frac{p(x_i)^2}{p(x_i)}\\
&~~~~~~~~~~~~~~~~~~~~~~~~~~~~~+\frac{(q(x_j)-\epsilon-p(x_j))^2}{p(x_j)}-\frac{(q(x_j)-p(x_j))^2}{p(x_j)}\\
&=d_{\chi^2}(q,p)+\frac{\epsilon^2-2\epsilon p(x_i)}{p(x_i)}+\frac{\epsilon^2-2\epsilon(q(x_j)-p(x_j))}{p(x_j)}\\
&=d_{\chi^2}(q,p)+\epsilon\left(\epsilon\left(\frac{1}{p(x_i)}+\frac{1}{p(x_j)}\right)-2\frac{q(x_j)}{p(x_j)}\right).
\end{align*}
Since $q(x_j)>0$, we can choose $\epsilon$ small enough such that the second term is strictly negative. Since $d_{\chi^2}(q,p)\leq\delta$, this then implies that $d_{\chi^2}(q',p)<\delta$, so indeed $q'\in B_{\chi^2}(p,\delta)$.

The corresponding expectation is given by
\begin{align*}
E_{q'}(f)&=\sum_{k=1}^n q'(x_k)f(x_k)\\
&=\sum_{k=1}^n q(x_k)f(x_k)+\epsilon(f(x_i)-f(x_j))
=E_q(f)+\epsilon(f(x_i)-f(x_j)).
\end{align*}
\end{proof}

\begin{lemma}\label{lemma:maxsupportchi2}
There are $q^*\in B_{\chi^2}(p,\delta)$ and $k^*\in\{\ell,\ldots,n\}$ such that $q^*(x_i)>0$ for all $i\leq k^*$, $q^*(x_i)=0$ for all $i>k^*$ and $E_{q^*}(f)=\underline{E}^{\chi^2}_{p,\delta}(f)$.
\end{lemma}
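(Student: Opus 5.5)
Since $B_{\chi^2}(p,\delta)$ is a non-empty (it contains $p$) closed and bounded subset of $\Sigma_\mathcal{X}$ and $q\mapsto E_q(f)$ is continuous, the infimum $\underline{E}^{\chi^2}_{p,\delta}(f)$ is attained. The set of minimizers is itself closed, so we can pick among them a $q^*$ that additionally satisfies a well-chosen extremality condition. I would choose one that is lexicographically maximal in the vector of indicators of positivity. More concretely, I would select a minimizer $q^*$ for which the largest index $k^*$ such that $q^*(x_1),\ldots,q^*(x_{k^*})$ are all strictly positive is as large as possible. This index is a finite discrete quantity over the non-empty set of minimizers, so such a $q^*$ exists without any closedness argument.

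First I show that $q^*(x_1)>0$, and indeed that $q^*(x_i)>0$ for all $i\leq\ell$, so that $k^*\geq\ell$. Suppose some $i\leq\ell$ has $q^*(x_i)=0$. Since $q^*$ is a pmf, some $j$ has $q^*(x_j)>0$, and we have $f(x_i)=f(x_1)\leq f(x_j)$. Lemma~\ref{lemma:movemasschi2} then yields a $q'\in B_{\chi^2}(p,\delta)$ with $E_{q'}(f)=E_{q^*}(f)+\epsilon(f(x_i)-f(x_j))\leq E_{q^*}(f)$. So $q'$ is again a minimizer, and equality must hold. The new $q'$ has $q'(x_i)>0$ and $q'(x_j)=q^*(x_j)-\epsilon>0$, and it leaves all other coordinates unchanged. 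Hence its support strictly contains that of $q^*$.

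This suggests a cleaner extremality criterion. I would instead choose $q^*$ to be a minimizer whose support has maximal cardinality. The argument then runs as follows. Let $k^*$ be the largest index such that $q^*(x_i)>0$ for all $i\leq k^*$. It remains to show that $q^*(x_i)=0$ for all $i>k^*$. Suppose not. Then $q^*(x_{k^*+1})=0$ and some $j>k^*+1$ has $q^*(x_j)>0$, with $f(x_{k^*+1})\leq f(x_j)$. Lemma~\ref{lemma:movemasschi2} produces a $q'$ that is still in the ball, has expectation at most $E_{q^*}(f)$ (hence is a minimizer), and has support equal to the support of $q^*$ plus $x_{k^*+1}$. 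This contradicts the maximality of the support size. The same argument applied with $k^*+1$ replaced by any $i\leq\ell$ gives $k^*\geq\ell$; if $k^*=0$ it gives $k^*\geq 1$, since some $j$ carries positive mass.

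The main subtlety I expect is ensuring that the transfer in Lemma~\ref{lemma:movemasschi2} keeps $q(x_j)$ strictly positive. This holds because $\epsilon<q(x_j)$, so the support genuinely grows rather than merely shifting. A second subtlety is that the expectation does not increase, which requires $f(x_i)\leq f(x_j)$; this is guaranteed by $i<j$ together with the sorting of $\mathcal{X}$.
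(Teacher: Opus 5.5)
Your proof is correct and follows the paper's argument. You pick a minimizer of maximal support size and use Lemma~\ref{lemma:movemasschi2} to show that any zero inside the support, or a support that stops before $\ell$, could be filled without increasing $E_q(f)$. The only cosmetic difference is that you define $k^*$ as the end of the initial run of positive entries and then prove the tail vanishes, whereas the paper defines $k^*$ as the last positive index and proves there are no zeros before it. The lexicographic criterion you start with is never used and can be dropped.
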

\begin{proof}
Since $\smash{B_{\chi^2}(p,\delta)}$ is closed, bounded and non-empty, there is some $q\in B_{\chi^2}(p,\delta)$ such that $E_q(f)=\underline{E}^{\chi^2}_{p,\delta}(f)$. Among all such $q$, pick one that has maximal support size $\vert\{i\in\{1,\ldots,n\}:q(x_i)>0\}\vert$ and denote it by $q^*$, and let $k^*$ be the largest index such that $q^*(x_{k^*})>0$. Then by construction, $q^*(x_i)=0$ for all $i>k^*$.
We will show that $k^*\geq\ell$ and that $q(x_i)>0$ for all $i<k^*$.

Suppose that $k^*<\ell$. Then $f(x_{k^*})=f(x_\ell)$.
Applying Lemma~\ref{lemma:movemasschi2} with $i=\ell$ and $j=k^*$, we find that there is some $0<\epsilon<q^*(x_{k^*})$ such that the corresponding $q'\in\Sigma_\mathcal{X}$ is also an element of $B_{\chi^2}(p,\delta)$ and satisfies
\begin{equation*}
E_{q'}(f)=E_{q^*}(f)+\epsilon(f(x_\ell)-f(x_{k^*}))=E_{q^*}(f),
\end{equation*}
which contradicts the definition of $q^*$, since $q'$ has a strictly larger support size than $q^*$. So indeed $k^*\geq\ell$. 

Now suppose that there is some $i<k^*$ such that $q^*(x_i)=0$. Then we can apply Lemma~\ref{lemma:movemasschi2} with this $i$ and $j=k^*$ to find that there is some $0<\epsilon<q^*(x_{k^*})$ such that the corresponding $q'\in\Sigma_\mathcal{X}$ is also an element of $B_{\chi^2}(p,\delta)$ and satisfies
\begin{equation*}
E_{q'}(f)=E_{q^*}(f)+\epsilon(f(x_i)-f(x_{k^*}))\leq E_{q^*}(f),
\end{equation*}
which again contradicts the definition of $q^*$ because $q'$ has a strictly larger support size than $q^*$. So indeed $q^*(x_i)>0$ for all $i<k^*$.
\end{proof}

\begin{proof}[Proof of Theorem~\ref{thm:lowerexpectationchi2}]
Let $q^*\in B_{\chi^2}(p,\delta)$ and $k^*\in\{\ell,\ldots,n\}$ be as in Lemma~\ref{lemma:maxsupportchi2}. So $q^*(x_i)>0$ for all $i\leq k^*$, $q^*(x_i)=0$ for all $i>k^*$ and $\smash{E_{q^*}(f)=\underline{E}^{\chi^2}_{p,\delta}(f)}$.

We now consider two cases. If $r=\ell$, then $k^*=\ell$, so $f(x_1)=\ldots=f(x_\ell)$ and therefore also $\mu_r=f(x_1)$. Since $q^*(x_i)=0$ for all $i>\ell=r$, this implies that
\begin{align*}
    \underline{E}^{\chi^2}_{p,\delta}(f)=
E_{q^*}(f)&=\sum_{i=1}^\ell q^*(x_i)f(x_i)=f(x_1)\sum_{i=1}^\ell q^*(x_i)=f(x_1),
\end{align*}
as required. So it remains to consider the case that $r>\ell$.

In that case, we first show that $r\geq k^*$. 
To that end, assume \emph{ex absurdo} that $r<k^*$. Then 
\begin{equation*}
    \delta\geq\delta_{k^*}\geq\frac{1-m_{k^*}}{m_{k^*}}.
\end{equation*}
Since $k^*>r\geq\ell$, it therefore follows from Lemma~\ref{lemma:lowerexpectationchi2} that $q_{k^*}$ is the unique element of $\mathcal{Q}_{k^*}$ that attains the minimum in Equation~\eqref{eq:lemma:lowerexpectationchi2} for $k=k^*$ and that $q_{k^*}(x_{k^*})\leq0$. Since $q^*\in\mathcal{Q}_{k^*}$ and $q^*(x_{k^*})>0$, this implies that $E_{q^*}(f)>E_{q_{k^*}}(f)$.

Since $q^*(x_i)>0$ for all $i\leq k^*$, there is some $\lambda\in(0,1)$ such that $q'=(1-\lambda)q^*+\lambda q_{k^*}$ is an element $\Sigma_\mathcal{X}$. Due to Lemma~\ref{lemma:convexitychi2}, $q'$ is in fact an element of $B_{\chi^2}(p,\delta)$, so $E_{q'}(f)\geq\underline{E}^{\chi^2}_{p,\delta}(f)=E_{q^*}(f)$. On the other hand, we also have that \begin{equation*}
    E_{q'}(f)
    =(1-\lambda)E_{q^*}(f)+\lambda E_{q_{k^*}}(f)
    <(1-\lambda)E_{q^*}(f)+\lambda E_{q^*}(f)
     =E_{q^*}(f),
\end{equation*}which is a contradiction. So it must be that $r\geq k^*$.

Since $r\geq k^*$, we know that $q^*\in\mathcal{Q}_r$, so $\mathcal{Q}_r\neq\emptyset$. Since $r>\ell$, we therefore know from Lemma~\ref{lemma:lowerexpectationchi2} that
\begin{equation*}
\mu_r-\sigma_r\sqrt{m_r\delta-(1-m_r)}
=\min_{q\in\mathcal{Q}_r} E_q(f)
=E_{q_r}(f)
\leq E_{q^*}(f)=\underline{E}^{\chi^2}_{p,\delta}(f).
\end{equation*}
On the other hand, since $\delta<\delta_r$, we also know from Lemma~\ref{lemma:lowerexpectationchi2} that $q_r(x_i)>0$ for all $i\in\{1,\ldots,r\}$, which implies that $q_r\in B_{\chi^2}(p,\delta)$ and therefore, that
\begin{equation*}
\underline{E}^{\chi^2}_{p,\delta}(f)\leq E_{q_r}(f).
\end{equation*}
Combining both inequalities, we obtain Equation~\eqref{eq:lowerexpectationchi2}.
\end{proof}

\begin{proof}[Proof of Equation~\eqref{eq:lowerexpectationchi2three}]
It suffices to observe that the three cases correspond to $r=3$, $r=2$ and $r=1$, respectively, and to then apply Theorem~\ref{thm:lowerexpectationchi2} taking into account that $m_3=1$.
\end{proof}

\begin{proof}[Proof of Equation~\eqref{eq:lowerexpectationchi2two}]
If $f(x_1)=f(x_2)$, then $\ell=n=r$, so we know from Theorem~\ref{thm:lowerexpectationchi2} that $\underline{E}^{\chi^2}_{p,\delta}(f)=f(x_1)$. Since then also $\sum_{i=1}^2p(x_i)f(x_i)=f(x_1)$ and $\vert f(x_2)-f(x_1)\vert=0$, this is consistent with the above expression. 

If $f(x_1)<f(x_2)$, then $\ell=1$, $m_2=1$, $\mu_2=\sum_{i=1}^2p(x_i)f(x_i)$, 
\begin{align*}
\sigma_2^2
&=p(x_1)[f(x_1)-\mu_2]^2+p(x_2)[f(x_2)-\mu_2]^2\\
&=p(x_1)\left[p(x_2)(f(x_1)-f(x_2))\right]^2+p(x_2)\left[p(x_1)(f(x_2)-f(x_1))\right]^2\\
&=p(x_1)p(x_2)(f(x_2)-f(x_1))^2(p(x_1)+p(x_2))=p(x_1)p(x_2)(f(x_2)-f(x_1))^2
\end{align*}
and
\begin{equation*}
    \delta_2=\frac{1}{m_2}\left(\frac{\sigma_2^2}{(f(x_2)-\mu_2)^2}+1-m_2\right)=\frac{p(x_1)p(x_2)(f(x_2)-f(x_1))^2}{p(x_1)^2(f(x_2)-f(x_1))^2}=\frac{p(x_2)}{p(x_1)}.
\end{equation*}
If $\delta\geq\frac{p(x_2)}{p(x_1)}=\delta_2$, then $r=1=\ell$, so it follows from Theorem~\ref{thm:lowerexpectationchi2} that $\underline{E}^{\chi^2}_{p,\delta}(f)=f(x_1)$, which is consistent with the above expression. If $\delta<\frac{p(x_2)}{p(x_1)}=\delta_2$, then $r=2$, so it follows from Theorem~\ref{thm:lowerexpectationchi2} that
\begin{equation*}
\underline{E}^{\chi^2}_{p,\delta}(f)
=\mu_2-\sigma_2\sqrt{m_2\delta-(1-m_2)},
\end{equation*}
which is again consistent with the above expression.
\end{proof}

\end{document}